\newtheorem{theorem}{Teorema}[section]
\newtheorem{lemma}[theorem]{Lemma}
\newtheorem{proposition}[theorem]{Proposition}
\theoremstyle{definition}
\begin{document}

\title{Satellite knots and trivializing bands}

\author{Lorena Armas-Sanabria}
\address{ \hskip-\parindent
Lorena Armas-Sanabria\\
Universidad Tecnol\'ogica de Quer\'etaro\\
Av. Pie de la Cuesta 2501\\
  Quer\'etaro, MEXICO}
 \email{lorenaarmas089@gmail.com}

\author{Mario Eudave-Mu\~noz}
\address{ \hskip-\parindent
Mario  Eudave-Mu\~noz\\
Instituto de Matem\'aticas\\
 Universidad Nacional Aut\'onoma de M\'exico\\
Unidad Juriquilla, Quer\'etaro, 
 MEXICO}
 \email{mario@matem.unam.mx}

\maketitle

\begin{abstract}
We show an infinite family of satellite knots that can be unknotted by a single band move, but such that there is no band unknotting the knots which is disjoint from the satellite torus.\end{abstract}

\section{Introduction}.

Let $K$ be a knot or link in the 3-sphere. A banding $K_b$ of $K$ is a knot or link obtained from $K$ by the following construction.
Let $b:I\times I \rightarrow S^3$ be an embedding such that $b(I\times I)\cap K = b(\partial I \times I)$, and then $K_b$
is defined as $K_b=(K-b(\partial I \times I))\cup b(I\times \partial I)$.

If $K$ and $K_b$ are both knots, a band move is also called an $H(2)$-move, see \cite{HN}. Then the $u_2$ unknotting number of a knot $K$,
$u_2(K)$, is defined as  the minimal number of $H(2)$ moves needed to transform $K$ into a trivial knot.

Let $V$ be a standard solid torus contained in $S^3$, and $K'$ a knot embedded in $V$ such that
$K'$ is not contained in any $3$-ball contained in $V$ nor it is isotopic to a longitude of $V$. Let $J$ be a non-trivial knot in $S^3$ and let $N(J)$ be a closed regular neighborhood of $J$. Let $h: V \rightarrow N(J)$ be a homeomorphism such that a preferred longitude $\lambda \subset V$ is mapped to a longitude $l$ of $N(J)$.
Then $h(K') = K$ is called a satellite knot with companion  knot $J$ and pattern $(V, K',l)$. The torus $Q=\partial N(J)$ is called a satellite torus of $K$. For an example of  a satellite knot see Figure \ref{satellite}.

It is easy to construct examples of satellite knots which can be unknotted by a single band move. Let $J$ be a non-trivial knot with neighborhood $N(J)$.
Take a trivial knot $U$ inside $N(J)$. Take a band $b$ for $U$ disjoint from the torus $\partial N(J)$, which wraps around $N(J)$ in a complicated manner.
The knot $K_b$ obtained by the band move  is a satellite knot with companion $J$. By taking a band $b'$ dual to $b$, we get the trivial knot $U$, and the band $b'$ is clearly disjoint from the companion torus.

So, it is natural to ask: 

\textit{Question:}
if $K$ is a satellite knot with satellite torus $Q$, which can be unknotted by a single band move, is there an isotopy that makes the corresponding band disjoint from $Q$?

Note that if instead of doing a band move we do a crossing change, then the answer to the previous question is positive. That is, any crossing change that unknots
a satellite knot can be made disjoint from the satellite torus. This was proved by Scharlemann and Thompson \cite{ST}. This result can be generalized to any
non-integral tangle replacement, see \cite{E}.

Here we give a negative answer to that question. We show the existence of an infinite family of knots $K(m,n,p;q)$, where $m,\, n,\, p\, ,q$ are integral parameters with some restrictions, such that $K(m,n,p;q)$ is a satellite knot with companion a $(2,-q)$-torus knot $J$, and a banding of it produces the trivial knot. It is shown that the band intersects the satellite torus $Q$ in two arcs and as the winding number of $K(m,n,p;q)$ in $N(J)$ is $\pm 1$ or $\pm 3$, it follows easily that the band cannot be made disjoint from the torus $Q$.

By taking double branched covers, we get an infinite family of strongly invertible hyperbolic knots $\tilde K(m,n,p;q)$, having an integral exceptional Dehn surgery that produces a manifold containing an incompressible torus $T$ that intersects the surgered solid tori in $4$ disks. In other words, there is an essential four punctured torus properly embedded in the exterior of $\tilde K(m,n,p;q)$. Many examples of knots with this type of surgery are already known, see the introduction of \cite{ER} for  a survey of this topic. The new in these examples is that the essential torus is disjoint from the involution axis of the strongly invertible knot.


\section{ Main examples}.
\label{sec2}

Let $V$ be a standard solid torus in $S^3$ and $K'$ be the pattern shown in Figure \ref{pattern}, where $m,\, n,\, p,\, q$ denote integral numbers. An horizontal box labeled by, 
say $[m]$, denotes $m$ horizontal crossings, and the box labeled $[q]$ denotes $q$ vertical crossing. Our convention on the sign of the crossings is given in Figure \ref{pattern}. Assume that $m,\, n \not=0$, and that $q$ is odd; $p$ can be any integer. By inspection it can be seen that $K'$ is in fact a knot if $m, \, n$ are odd number and $p$ is even, or if $m$ is odd and $n,\, p$ are even, or if $m,\, p$ are even and $n$ odd, or if $m,\, n,\, p$ are all even. In the remaining cases $K'$ is a two components link.  The winding number of $K'$ in $V$ is $\pm 1$ or $\pm 3$, depending on an orientation given to $K'$. In fact, when $K'$ is a knot, it has winding number $\pm 3$ when $m,\, n$ are odd and $p$ is even, and it has winding number $\pm 1$ in all the other cases.

\begin{figure}

\includegraphics[angle=0, width=8true cm]{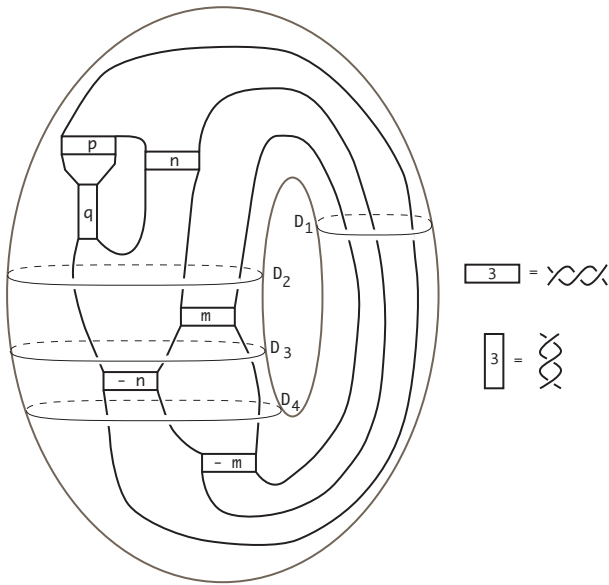}

\caption{The pattern knot.}
\label{pattern}

\end{figure}

Consider the disks $D_1$, $D_2$, $D_3$, $D_4$ contained in $V$ as shown in Figure \ref{pattern}. These are meridian disks of $V$, each intersecting $K'$ in 3 points. Note that 
these disks divide $V$ in four regions which we denote by $H_1, H_2, H_3, H_4$, where $H_i$ is bounded by $D_i$ and $D_{i+1}$, $\textrm{mod}\, 4$. Note that $t_i=H_i\cap K'$ 
consists of three arcs, and then $(H_i,t_i)$ can be considered as a 3-tangle, as shown in Figure \ref{regions} (a) for $H_2$. The pair $(H_i,t_i)$ can be seen as the union of a 2-tangle,
in fact a rational tangle $(B,t)=R(p/q)$, and a trivial arc, as shown in Figure \ref{regions} (b).  In fact, $H_1$ can be seen as the union of the rational tangle $R((npq+n+p)/(pq+1))$ and a 
trivial arc by the right; $H_2$ can be seen as the union of the rational tangle $R(m)$ and a trivial arc by the left; $H_3$ can be seen as the union of the rational tangle $R(-n)$ and a
trivial arc by the right; $H_4$ can be seen as the union of the rational tangle $R(-m)$ and a trivial arc by the left. The tangle $(H_i,t_i)$ will be a braid, that is, it consists of descending 
arcs going from $D_i$ to $D_{i+1}$, only if $m=\pm 1$, or $n=\pm 1$, or $p=0$ and $n=\pm 1$, or $p=1,q=-3,n=1$, or $p=-1,q=3, n=-1$.

\begin{figure}

\includegraphics[angle=0, width=10true cm]{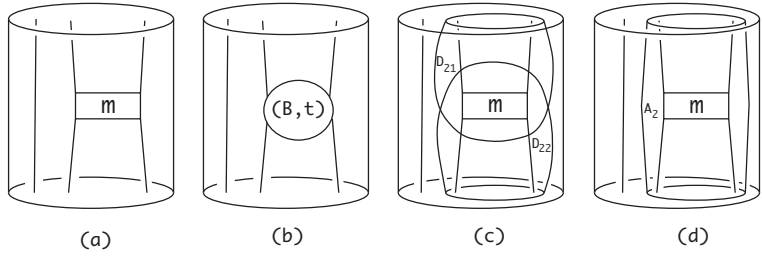}

\caption{The tangle $H_2$.}
\label{regions}

\end{figure}

Note that there is a disk $D_{i1}$ in $H_i$, whose boundary is contained in $D_i$, and which intersects $K'$ in two
points, in fact, $D_{i1}$ cuts off the rational tangle $(B,t)$, as in Figure \ref{regions} (c). Note that  $D_{i1}$ is not isotopic to a disk in $D_i$, except when $(H_i,t_i)$ is a braid.  Similarly 
there is such a disk $D_{i2}$ whose boundary is contained in $D_{i+1}$, also shown in Figure \ref{regions} (c). Note also that there is an annulus $A_i$ in $H_i$, with one boundary 
component in $D_i$ and other in $D_{i+1}$, which encloses the tangle $(B,t)$, as in Figure \ref{regions} (d). If $(H_i,t_i)$ is a braid then there are several annuli which enclose two of 
the arcs of the braid, which we also denote by $A_i$.

\begin{lemma} \label{wrapping} The wrapping number of $K'$ in $V$ is exactly 3. \end{lemma}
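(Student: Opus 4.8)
The plan is to prove the two inequalities $w(K')\le 3$ and $w(K')\ge 3$ separately, where $w$ denotes the wrapping number, i.e.\ the minimum of $|D\cap K'|$ over all essential meridian disks $D$ of $V$ transverse to $K'$. The upper bound is immediate from the construction: each meridian disk $D_i$ meets $K'$ in exactly three points, so $w(K')\le 3$.

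For the lower bound I would first exploit the homological data already recorded. The winding number of $K'$ in $V$ is $\pm1$ or $\pm3$, in particular odd, and the winding number is the algebraic intersection of $K'$ with a meridian disk; since geometric and algebraic intersection numbers have the same parity, every meridian disk meets $K'$ in an odd number of points. Hence $w(K')$ is odd and at least $1$, and it suffices to rule out $w(K')=1$, that is, to show no essential meridian disk $D$ meets $K'$ in a single point. Suppose such a $D$ exists. I would put $D$ in general position with respect to $\mathcal{D}=D_1\cup D_2\cup D_3\cup D_4$ and isotope it, keeping it transverse to $K'$, so as to minimize $|D\cap\mathcal{D}|$. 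Circles of $D\cap\mathcal{D}$ are then removed by the standard innermost-disk argument, using the irreducibility of $V\setminus N(K')$ and the incompressibility of the punctured disks $D_i\setminus N(K')$ in the tangle complements $H_i\setminus N(t_i)$. If the resulting intersection were empty, $D$ would lie in a single ball region $H_i$ and be inessential in $V$, contrary to hypothesis; so $D\cap\mathcal{D}$ consists of at least one arc.

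Next I would run an outermost-arc analysis. An arc of $D\cap\mathcal{D}$ outermost on $D$ cuts off a subdisk $\delta\subset D$ whose interior misses $\mathcal{D}$, so $\delta$ lies in one region $H_i$ with $\partial\delta=a\cup b$, $a\subset D_i$ and $b\subset\partial V$; thus $\delta$ is a boundary-compressing or product disk for the $3$-string tangle $(H_i,t_i)$. The crucial point is that $(H_i,t_i)$ is the union of a \emph{nontrivial} rational tangle $R(p/q)$ (with $q$ odd) and a trivial arc, so the essential disks $D_{i1},D_{i2}$ and the annulus $A_i$ obstruct such a $\delta$ from avoiding $t_i$: were $\delta\cap K'=\emptyset$, it would either produce an isotopy of $D_i$ reducing $|D\cap\mathcal{D}|$ or compress $D_{i1}$ or $D_{i2}$, contradicting their essentiality (which holds precisely because the tangle is not braided). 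Bookkeeping the $K'$-intersections contributed by the outermost disks as one sweeps through the four regions then forces $|D\cap K'|\ge 3$, contradicting $|D\cap K'|=1$, and hence $w(K')=3$.

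The hard part, on which I would spend the most care, is exactly this outermost-disk step: one must verify that the planar surfaces $D_i\setminus N(K')$ are genuinely incompressible and sufficiently boundary-incompressible in the tangle complements $H_i\setminus N(t_i)$, which is where the nontriviality of $R(p/q)$ and the essentiality of $D_{i1}$, $D_{i2}$, $A_i$ really enter. Separately, I would have to dispose of the finitely many braided cases listed in Section~\ref{sec2} ($m=\pm1$, $n=\pm1$, $p=0$ with $n=\pm1$, and the two $(p,q,n)$ exceptions), where those disks degenerate into boundary-parallel ones; there I would argue directly that, even though individual tangles braid, the global configuration of three strands with odd (possibly $\pm1$) winding still admits no meridian disk meeting $K'$ once, so the conclusion $w(K')=3$ persists.
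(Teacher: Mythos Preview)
Your overall strategy matches the paper's---use the odd winding number to reduce to ruling out wrapping number $1$, take a hypothetical meridian disk $D$ with $|D\cap K'|=1$, and play it off against $D_1,\dots,D_4$---but you miss the one observation that makes the paper's argument short, and you misidentify the final contradiction.

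The observation you miss is that $\partial D$ and all of the $\partial D_i$ are meridian curves on the torus $\partial V$, so one may first isotope $\partial D$ to be disjoint from every $\partial D_i$. With this normalization $D\cap D_i$ consists \emph{only} of simple closed curves; there are no arcs at all. Your entire outermost-arc analysis---which you yourself flag as the hard part and which drags in the essentiality of $D_{i1},D_{i2},A_i$ together with a separate treatment of the braided exceptions---is therefore unnecessary. The paper's proof is just the innermost-circle step (an innermost subdisk $D'\subset D$ meets $K'$ in at most one point, lies in some $H_i$, and is isotopic to a subdisk of $D_i$, so the intersection curve can be removed), iterated until $D$ lies in a single $H_i$.

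Your stated reason for the contradiction at that last stage is wrong: you claim that $D\subset H_i$ forces $D$ to be inessential in $V$, but this is false. The curve $\partial D$ is a meridian of $\partial V$, so $D$ is an essential meridian disk of $V$ regardless of whether it happens to sit inside the ball $H_i$; a parallel push-off of $D_i$ into $H_i$ is exactly such a disk. What actually fails is the condition $|D\cap K'|=1$: any meridian disk contained in $H_i$ separates $D_i$ from $D_{i+1}$ and hence must meet every strand of $t_i$ running between them. The paper's ``clearly not possible'' refers to this tangle constraint, not to inessentiality of $D$ in $V$.
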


\begin{proof} The winding number of $K'$ is $V$ is $\pm 1$ or $\pm 3$, so the wrapping number is 1 or 3. Suppose that it is 1. Then there is a meridian disk $D$ of $V$ which 
intersects $K'$ in one point. We can assume that $\partial D$ is disjoint from $\partial D_i$ for all $i$, hence $D\cap D_i$ consists only of simple closed curves. Let $\gamma$ be an 
innermost such curve in $D$. Then $\gamma$ bounds a disk $D'$ in $D$ with interior disjoint from the $D_i's$, and it is either disjoint from $K'$ or intersects it in one point.  The disk $D'$ must be in a region $H_i$. Then in fact $D'$ is isotopic to a disk in $D_i$, and after an isotopy the intersection is removed. We continue 
until there is no intersection left between $D$ and the $D_i's$. Then $D$ would be contained in a region $H_i$, which clearly is not possible. \end{proof}

\begin{lemma} \label{localknot} The knot $K'$ has no local knots, that is, if $S$ is a sphere in $V$ which intersects $K'$ in two points, then $S$ bounds a 3-ball which contains an unknotted spanning arc of $K'$.
\end{lemma}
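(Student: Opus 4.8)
The plan is to decompose $S$ along the meridian disks $D_1,\dots,D_4$ and reduce the question to the local behaviour inside a single tangle region $H_i$, where $(H_i,t_i)$ is a rational tangle together with a trivial arc and hence carries no local knots. First I would put $S$ in general position with respect to $\bigcup_i D_i$. Since a local-knot sphere may be assumed to lie in the interior of $V$ and to meet $K'$ transversely in its two points, $S$ is disjoint from $\partial V\supset\bigcup_i\partial D_i$, so $S\cap D_i$ consists of simple closed curves only; I also arrange $S$ to meet $K'$ away from the three points $D_i\cap K'$. I then isotope $S$ to minimise the total number of these curves and assume, for contradiction, that it is positive.

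Next comes the innermost-disk reduction. Choose a curve $\gamma$ that is innermost on $S$, bounding a subdisk $E\subset S$ whose interior misses every $D_j$; then $E$ lies in one region $H_i$ and $\gamma$ lies on one of its bounding disks $D$ (equal to $D_i$ or $D_{i+1}$), bounding a subdisk $\delta\subset D$. Because $E\subset S$, it meets $K'$ in $0$, $1$, or $2$ points, while $\delta$ contains some of the three points of $D\cap K'$. The heart of the argument is to show that each such $\gamma$ can be removed. I would argue that the three-times punctured disk $(D,D\cap K')$ is incompressible and $\partial$-incompressible in the tangle exterior $H_i\setminus t_i$: this is where I use that $(H_i,t_i)$ is the union of a rational tangle and a trivial arc, so that its exterior is a handlebody in which the two strands of the rational part cannot be split off by a disk, together with Lemma \ref{wrapping}, which guarantees that the three intersection points of $K'$ with each $D_i$ are essential and cannot be reduced. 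Granting this, the curve $\gamma$, bounding the disk $E$ in $H_i$ (after handling the punctured cases by capping $E$ with $\delta$ and appealing to irreducibility of $V\setminus K'$ and to the absence of local knots in the rational tangle), must be parallel to a curve in $D$ missing the punctures, and an isotopy of $S$ across the region between $E$ and $\delta$ lowers $|S\cap\bigcup_j D_j|$, contradicting minimality.

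Once $S$ is disjoint from all the $D_i$, it is contained in a single region $H_i$. Since $(H_i,t_i)$ is a rational tangle plus a trivial arc, and such a pair has no local knots—each strand being unknotted and unlinked and the rational-tangle exterior being a handlebody—the sphere $S$ bounds a ball inside $H_i$ meeting $K'$ in a single unknotted spanning arc, which is exactly the desired conclusion. The main obstacle I anticipate is the incompressibility and curve-removal step, and in particular the subcases where the innermost disk $E$ meets $K'$ in one or two points; these require combining the explicit rational-tangle structure of Figure \ref{regions} with the wrapping-number bound of Lemma \ref{wrapping} to rule out genuinely knotting configurations.
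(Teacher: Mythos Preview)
Your approach is essentially the same as the paper's: intersect $S$ with the meridian disks $D_i$, remove innermost curves using the rational-tangle-plus-trivial-arc structure of each $(H_i,t_i)$, and conclude once $S$ lies in a single region. The paper in fact gives only one sentence (``similar to that of Lemma~\ref{wrapping}''), so you have supplied considerably more detail than the original while following the same strategy.
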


\begin{proof} The proof is similar to that of Lemma \ref{wrapping}, by looking at the intersections between a sphere with the disks $D_i$. \end{proof}

\begin{lemma} \label{disks} If $E$ is a disk properly embedded in $H_i$, whose boundary lies in $D_i$ or $D_{i+1}$ and which intersects $K'$ in two points, then it is either parallel to a disk on $D_i$ or $D_{i+1}$, or it is isotopic to $D_{i1}$ or $D_{i2}$. \end{lemma}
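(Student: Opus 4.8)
The plan is to locate $E$ relative to the annulus $A_i$. Assume without loss of generality that $\partial E\subset D_i$; by the symmetry that interchanges $D_i$ with $D_{i+1}$ and $D_{i1}$ with $D_{i2}$, this costs nothing. Recall that $A_i$ separates $H_i$ into two balls: a ball $W_r$ carrying the rational tangle $(B,t)$, which meets $K'$ in two arcs, and a ball $W_t$ carrying the trivial arc, which meets $K'$ in one arc. Since $A_i\cap K'=\emptyset$, the idea is to isotope $E$, always keeping $|E\cap K'|=2$, so as to minimize $|E\cap A_i|$, and then to read off the isotopy class of $E$ from the way it is divided by $A_i$ together with which of the three points of $K'\cap D_i$ are enclosed by the subdisk $\delta\subset D_i$ bounded by $\partial E$.

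The first task is to simplify $E\cap A_i$, which consists of circles and arcs. An innermost circle bounds a subdisk of $A_i$ disjoint from $K'$ together with a subdisk $E'\subset E$; since a $2$-sphere bounding a ball in $H_i$ meets $K'$ in an even number of points and $K'$ has no local knots (Lemma \ref{localknot}), $E'$ carries either no point of $K'$, in which case the circle is removed using that $H_i$ is a ball, or both points, in which case $E'$ is boundary parallel and the circle is again removed by dragging the two points along. An outermost arc of $E\cap A_i$ cuts off a subdisk of $E$ with boundary in $A_i\cup D_i$; this yields a reducing isotopy when it misses $K'$, while it survives precisely when $\partial E$ encloses the foot of the trivial arc together with a single foot of the rational tangle. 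In this way $E$ is reduced either to a disk lying in a single ball $W_r$ or $W_t$, or to one of the two boundary parallel ``mixed'' disks, which are parallel to a disk on $D_i$ or on $D_{i+1}$.

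It remains to classify $E$ inside a single ball. If $E\subset W_t$, it is a disk in a ball meeting one unknotted arc in two points, hence boundary parallel, and tracing the parallelism to $\partial H_i$ shows $E$ is parallel to a disk on $D_i$ or on $D_{i+1}$. If $E\subset W_r$, I would pass to the double branched cover of the rational tangle $(B,t)$, a solid torus in which $E$ lifts to a properly embedded annulus; such an annulus is boundary parallel or isotopic to the unique essential annulus of a solid torus, so $E$ is either boundary parallel, hence parallel to a disk on $D_i$ or $D_{i+1}$, or isotopic to the unique essential disk of the rational tangle, namely $D_{i1}$ or $D_{i2}$. The main obstacle is the middle step: because $|E\cap K'|=2$ is fixed, $\partial E$ may not be pushed across a foot of $K'$, so every simplification must preserve the two points; distinguishing the removable intersections with $A_i$ from those forced by the enclosed feet, and establishing the rigidity of the rational tangle, is where the argument is most delicate, and is exactly where Lemmas \ref{wrapping} and \ref{localknot} are used.
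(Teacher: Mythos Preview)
Your route is genuinely different from the paper's. The paper never touches $A_i$ or the double branched cover: it works directly with the disk $D_{i1}$, first establishing a small claim about ``half-disks'' $E'$ with $\partial E'\subset D_i\cup D_{i1}$ meeting $K'$ in at most one point (such $E'$ are parallel into $D_{i1}$, using that the rational tangle is prime), and then running an innermost-circle/outermost-arc argument on $E\cap D_{i1}$. Because $D_{i1}$ is itself a disk, the endgame is immediate: once only a single circle of intersection remains, $\partial E$ is forced to be parallel to $\partial D_{i1}$ and $E$ is either parallel into $D_i$ or isotopic to $D_{i1}$. Your use of $A_i$ plus the branched double cover of the rational tangle is a perfectly good alternative for the $W_r$ side, and arguably more conceptual.

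There is, however, a factual slip that you should fix. The annulus $A_i$ does \emph{not} cut $H_i$ into two balls: $W_r$ is a ball, but $W_t$ is a solid torus (think of $H_i\cong D^2\times I$ and $A_i\cong \gamma\times I$ for a circle $\gamma\subset\operatorname{int}D^2$; the outside of $\gamma$ is an annulus, so $W_t\cong\text{annulus}\times I$). Your sentence ``it is a disk in a ball meeting one unknotted arc in two points, hence boundary parallel'' is therefore not justified as stated. The conclusion survives: since $E$ is a disk, $\partial E$ must be null-homotopic in the solid torus $W_t$, hence inessential in the annulus $D_i\cap W_t$; parity forces the small disk it bounds there to miss the trivial-arc foot, and Lemma~\ref{localknot} then gives the parallelism into $D_i$. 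Relatedly, in your arc-removal step it is cleaner (and correct) to take the arc outermost on $A_i$ rather than on $E$: the outermost-on-$A_i$ disk is automatically disjoint from $K'$, so the reducing isotopy always exists and your ``surviving mixed case'' in fact never survives.
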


\begin{proof} Suppose that the 3-tangle $(H_i,t_i)$ is not a braid, for in this case the conclusion is obvious. Let $E'$ be a disk in $H_i$, such that 
$\partial E'=\alpha \cup \beta$, where $\alpha$ is an arc in $D_i$ and $\beta$ is an arc in $D_{i1}$. Suppose that the interior of $E'$ is disjoint from $D_{i1}$, and that
$E'$ is disjoint from $K'$ or intersects it transversely in one point. The arc $\beta$ in $D_{i1}$ cuts off a disk $D'$, which is disjoint from $K'$ or intersects it in one point. 
If $E'$ is disjoint from $K'$ and $D'$ intersects $K'$ in one point, then the disk $E'$ would separate the strings of the tangle $(B,t)$ determined by $D_{i1}$, which is not 
possible. In all other cases, $E'$ and $D'$ are parallel, that is, cobound with part of $D_i$ a 3-ball disjoint from $K'$, or intersecting it in an unknotted spanning arc.

Suppose $E$ is a disk as in the statement of the lemma, whose boundary lies on $D_i$. Look at the intersections between $D_{i1}$ and $E$. Let $\gamma$ be a simple closed curve of 
intersection which is innermost in $E$. If $\gamma$ bounds a disk disjoint from $K'$ or intersecting it in one point, then this is easily removed, for there are no local knots in $K'$. Now 
suppose that $\gamma$ is an outermost arc of intersection in $E$, bounding a disk $E'$. Then, by the previous paragraph, this disk would be parallel to a disk in $D_{i1}$, and by an 
isotopy the intersection could be removed. The only remaining case is that $\gamma$ bounds a disk intersecting $K'$ twice, and that there are no arcs of intersection. In that case 
$\partial E$ must be parallel to $\partial D_{i1}$, and then $E$ is parallel to a disk in $D_i$, or it is parallel to $D_{i1}$. \end{proof}

\begin{lemma} \label{annulus} If $E$ is an annulus properly embedded in $H_i$, whose boundary lies in $D_i \cup D_{i+1}$, and such that it is disjoint from $K'$, then it is either parallel to an annulus on $D_i$ or $D_{i+1}$, or it is isotopic to the annulus $A_i$, or it is parallel to the part of $\partial V$ lying in $H_i$, or is parallel to the frontier of an arc of $t_i$. \end{lemma}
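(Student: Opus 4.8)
The plan is to follow the pattern of Lemma~\ref{disks}: cut the annulus $E$ along the disks $D_{i1}$ and $D_{i2}$, remove the intersections by innermost/outermost arguments, and then classify $E$ inside each of the pieces into which $D_{i1}\cup D_{i2}$ divides $(H_i,t_i)$. If $(H_i,t_i)$ is a braid the statement is immediate, since then $H_i$ is a product $D^2\times I$ with three vertical strands: an annulus disjoint from the strands and with boundary on $D_i\cup D_{i+1}$ is parallel into $D_i$ or $D_{i+1}$, parallel to $\partial V\cap H_i$, encloses a single strand (and so is the frontier of an arc of $t_i$), or encloses exactly two strands (and so is one of the annuli $A_i$). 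From now on I assume $(H_i,t_i)$ is not a braid, and I put $E$ in transverse position with $D_{i1}\cup D_{i2}$ minimizing the number of components of $E\cap(D_{i1}\cup D_{i2})$.

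First I would dispose of the closed curves of intersection. An innermost one on $D_{i1}$ bounds a subdisk $\delta\subset D_{i1}$ meeting $K'$ in at most one point. If the curve is inessential on $E$, capping it off with $\delta$ and using irreducibility together with the absence of local knots (Lemma~\ref{localknot}) removes it. If instead it is a core of $E$, then $\delta$ compresses $E$; when $\delta$ is disjoint from $K'$ this again lets me isotope the curve away, while when $\delta$ meets $K'$ the curve is forced to be parallel to $\partial D_{i1}$, so that $E$ is recognized on the spot as a copy of $A_i$ or of a frontier annulus and we are done. The disk $D_{i2}$ is handled symmetrically, so I may assume there are no closed curves.

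Next come the arcs. An outermost arc of $E\cap D_{i1}$ on $E$ cuts off a disk $E'\subset E$, bounded by that arc together with a subarc of $\partial E\subset D_i\cup D_{i+1}$ and disjoint from $K'$; by the first paragraph of the proof of Lemma~\ref{disks} such a disk is parallel to a disk in $D_{i1}$ (or into $D_i$), so the intersection is removable. Iterating, and repeating with $D_{i2}$, I may assume $E$ is disjoint from $D_{i1}\cup D_{i2}$. Then $E$ lies either in the complementary product region $P=H_i\setminus B$, carrying only trivial strands, where the braid analysis above applies and produces the trivial case, the $\partial V\cap H_i$ case, the $A_i$ case, or a frontier annulus; or $E$ lies in the rational-tangle ball $B$, where, since $\partial E$ meets $\partial B$ only in the subdisks cut off by $D_{i1},D_{i2}$, it is a properly embedded annulus in the complement $B\setminus t$ of the rational tangle $R(p/q)$ whose boundary curves are parallel to $\partial D_{i1}$ or $\partial D_{i2}$. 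Classifying these shows $E$ is either boundary-parallel, recovering the trivial or the $A_i$ case, or encloses a single string and is the frontier of an arc of $t_i$.

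The main obstacle I expect is exactly this last step: carrying out the honest classification of essential annuli in the rational-tangle complement $B\setminus t$ and verifying that the only ones with boundary parallel to $\partial D_{i1}$ or $\partial D_{i2}$ are $A_i$ and the single-strand frontier annuli. Closely related, and equally delicate, is making sure in the second paragraph that a core curve of $E$ lying on $D_{i1}$ and bounding a $K'$-meeting disk really does force $E$ to be $A_i$ or a frontier annulus, rather than producing some genuinely new essential annulus; this is where the rational-tangle hypothesis on $(B,t)$, rather than an arbitrary $2$-tangle, is essential.
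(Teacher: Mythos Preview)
Your proposal is correct and follows exactly the approach the paper indicates; the paper's own proof is in fact a single sentence (``look at the intersections between $E$ and $D_{i1}$, $D_{i2}$ and $A_i$''). The one addition worth noting is that the paper also intersects $E$ with the annulus $A_i$: once $E$ has been made disjoint from $D_{i1}$, $D_{i2}$, and $A_i$, it sits in a product region containing at most one trivial strand of $t_i$, which disposes of the obstacle you flag (the classification inside the rational-tangle ball) without any further work.
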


\begin{proof} Again, look at the intersections between $E$ and $D_{i1}$, $D_{i2}$ and $A_i$. \end{proof}

\begin{lemma}  \label{Conwaysphere}There is no Conway sphere in $V$. \end{lemma}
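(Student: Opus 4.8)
The plan is to prove there is no Conway sphere in $V$ by the same "intersection with the standard disks" strategy that has been running through the previous lemmas, now combined with the structural facts established in Lemmas~\ref{wrapping}--\ref{annulus}. Recall that a Conway sphere for $K'$ is a $2$-sphere $S$ meeting $K'$ transversely in exactly $4$ points and essential in the knot exterior. First I would position $S$ with respect to the four meridian disks $D_1,D_2,D_3,D_4$: isotope $S$ so that $\partial D_i \cap S = \emptyset$ and $S$ is transverse to each $D_i$, so that $S\cap D_i$ is a union of simple closed curves. The goal is to eliminate all of these curves and force $S$ into a single region $H_i$, where a Conway sphere cannot be essential because the tangle decompositions of Lemma~\ref{disks} and Lemma~\ref{annulus} leave no room for one.

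The main body of the argument is an innermost-disk / standard cut-and-paste reduction on the curves of $S\cap(D_1\cup D_2\cup D_3\cup D_4)$. Let $\gamma$ be a curve of intersection innermost on some $D_i$, bounding a disk $D'\subset D_i$ whose interior misses all the other $D_j$. Then $D'$ meets $K'$ in $0$, $1$, or $2$ points. The curve $\gamma$ also bounds a disk $E$ in $S$ (a sphere), and I would run through the cases for $|E\cap K'|$ versus $|D'\cap K'|$. When the count along $D'$ is $0$ or $1$, Lemma~\ref{localknot} (no local knots) and Lemma~\ref{disks} let me conclude $D'$ is either trivial or isotopic to a copy of $D_{i1}$ or $D_{i2}$, and in each subcase I can isotope $S$ across $D'$ to remove $\gamma$ (or to reduce the total count of intersection curves). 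The delicate subcase is when a disk in $S$ cobounds with a subdisk of $D_i$ a ball meeting $K'$ in four points arranged as two strands; here I would appeal directly to the enumeration in Lemma~\ref{disks} to show such a configuration is isotopic to one side of $D_{i1}/D_{i2}$, hence removable. After finitely many such reductions $S\cap(D_1\cup\cdots\cup D_4)=\emptyset$, so $S$ lies in one region $H_i$.

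It then remains to rule out an essential Conway sphere inside a single tangle region $H_i$. Since $(H_i,t_i)$ is the union of the rational tangle $(B,t)=R(p/q)$ and a trivial arc, any $4$-punctured sphere in $H_i$ must, after the disk analysis of Lemma~\ref{disks} and the annulus analysis of Lemma~\ref{annulus}, be compressible or boundary-parallel in the knot exterior: the separating sphere would have to partition the three strands $t_i$ into pieces that either bound product regions or cut off the rational tangle, none of which yields an incompressible, non-boundary-parallel $4$-punctured sphere. In particular a Conway sphere would have to separate $t_i$ as $2+2$ strands, but $t_i$ has only three strands in $H_i$, so a sphere meeting $K'$ in four points inside $H_i$ is forced to meet some strand twice and bound a trivial product, contradicting essentiality.

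I expect the hard part to be the bookkeeping in the middle step, specifically guaranteeing that each innermost-curve reduction strictly decreases a well-chosen complexity (say the lexicographic pair consisting of the number of intersection curves and, secondarily, the number of intersections of $S$ with $D_{i1}\cup D_{i2}\cup A_i$ inside each region) so that the process terminates and cannot create new essential configurations. The trivial-arc-versus-rational-tangle splitting must be tracked carefully so that removing an intersection with some $D_i$ does not secretly introduce an inessential-looking but actually essential sphere; controlling this is exactly where Lemmas~\ref{disks} and~\ref{annulus} do the real work, and the proof should reduce, as the paper's terse style suggests, to the phrase "look at the intersections between $S$ and the disks $D_i$, $D_{i1}$, $D_{i2}$ and the annuli $A_i$," with the cases organized as above.
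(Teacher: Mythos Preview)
Your overall strategy---intersect $S$ with the meridian disks $D_i$ and analyze the pieces using Lemmas~\ref{disks} and~\ref{annulus}---matches the paper's, and your treatment of the case $S\cap(\bigcup D_i)=\emptyset$ is fine. But there is a genuine gap in your reduction step, and it is exactly the heart of the matter.

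You claim that when an innermost disk $E\subset S$ meets $K'$ in two points and Lemma~\ref{disks} identifies it with $D_{i1}$ or $D_{i2}$, the intersection is ``hence removable.'' It is not. The paper explicitly notes that $D_{i1}$ is \emph{not} isotopic to a disk in $D_i$ (unless $(H_i,t_i)$ is a braid), so such an $E$ is not parallel into $D_i$ and cannot be pushed off by an isotopy of $S$. Your plan to empty out $S\cap(\bigcup D_i)$ and trap $S$ in a single $H_i$ therefore stalls precisely at these twice-punctured disks. (A related issue: you take $\gamma$ innermost on $D_i$ and then speak of ``the'' disk $E\subset S$ it bounds; but that disk may contain further intersection curves. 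Taking $\gamma$ innermost on $S$, as the paper does, avoids this.)

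What the paper does instead is accept that, after removing the trivial ($0$- or $1$-point) disks, the curves $S\cap(\bigcup D_i)$ are concentric on $S$: two innermost twice-punctured disks $E_1,E_2$ (each isotopic to some $D_{ik}$ by Lemma~\ref{disks}) with a possibly empty chain of spanning annuli $F_1,\dots,F_r$ between them (each of a type listed in Lemma~\ref{annulus}). The contradiction then comes from a matching obstruction you do not mention: on the shared meridian disk $D_{i+1}$, the curves $\partial D_{i2}$ and $\partial D_{(i+1)1}$ (and likewise $\partial D_{i2}$ and $\partial A_{i+1}$) have nonempty intersection, so consecutive pieces of $S$ cannot be glued along a common boundary curve. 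That assembly argument, not a further reduction to empty intersection, is what finishes the proof.
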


\begin{proof} Suppose that $S$ is a Conway sphere for $K'$ in $V$, that is, $S$ is a sphere in $V$ that intersects $K'$ transversely in four points, and $S-K'$ is incompressible in 
$V-K'$.  Look at the intersections between $S$ and the disks $D_i$. If this intersection is empty, then $S$ would be contained in some $H_i$, but this is not possible for $(H_i,t_i)$ is a 
trivial tangle. Let $\gamma$ be a simple closed curve of intersection which is innermost in $S$. If $\gamma$ bounds a disk disjoint from $K'$ or intersecting it in one point, then this is 
easily removed. So $\gamma$ bounds a disk intersecting $K'$ twice. It follows that the intersection consists of concentric curves, with two of them, say $\gamma_1$ and $\gamma_2$, 
bounding disks $E_1$ and $E_2$ which intersect $K'$ in two points. Between $E_1$ and $E_2$ there is a collection of annuli 
$F_1, \dots, F_r$, which may be empty. If one of $E_i$ or $F_j$ is parallel to a disk or annulus in some $D_i$, then the number of curves of intersection could be reduced. Then, say, 
$E_1$ is in some $H_i$ and by Lemma \ref{disks} is parallel to the disk $D_{i2}$, and similarly $E_2$ is in some $H_j$ and is parallel to the disk $D_{jt}$. The annuli $F_j$ are then 
spanning annuli in some $H_i$. Note that $\partial D_{i2}$ and $\partial D_{(i+1)1}$ have non-empty intersection, and also $\partial D_{i2}$ and $\partial A_{i+1}$ have non-empty 
intersection, except perhaps if $H_{i+1}$ is a braid. By inspection it follows that is not possible to assemble the pieces to get the desired sphere.\end{proof}

\begin{lemma} \label{torus} If $T$ is an incompressible torus in $V-K'$, then it is either parallel to $\partial V$, or parallel to $\partial N(K')$. \end{lemma}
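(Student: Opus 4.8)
The plan is to argue exactly as in the previous lemmas by studying the intersection of $T$ with the four meridian disks $D_1,\dots ,D_4$. Since $T$ is closed and disjoint from $K'$, while each $\partial D_i$ lies on $\partial V$, the set $T\cap(\bigcup_i D_i)$ consists of simple closed curves, and I would first isotope $T$ so as to minimize the number of these curves. If a curve is innermost in some $D_i$ and bounds a subdisk of $D_i$ disjoint from $K'$, then incompressibility of $T$ lets me isotope $T$ across that subdisk and reduce the intersection; a curve bounding a subdisk meeting $K'$ in a single point cannot occur, since capping it off with a disk in $T$ would give a sphere meeting $K'$ once. After this reduction every remaining curve is essential in $D_i-K'$, essential on $T$, and all the curves of $T\cap D_i$ are mutually parallel on the torus $T$.

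Next I would dispose of the case $T\cap(\bigcup_i D_i)=\emptyset$. Then $T$ lies in the interior of a single ball $H_i$ and bounds a solid torus $W$ inside $H_i$. Since $T$ is disjoint from $K'$ and every arc of $t_i$ has both endpoints on $\partial H_i$, the solid torus $W$ is disjoint from $K'$, so a meridian disk of $W$ compresses $T$ in $V-K'$, contradicting incompressibility. Hence $T$ must meet the disks $D_i$.

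Cutting $T$ along the curves of $T\cap(\bigcup_i D_i)$ decomposes $T$ into annuli, each properly embedded in some $H_i$ with boundary on $D_i\cup D_{i+1}$ and disjoint from $K'$; Lemma \ref{annulus} then classifies each such piece. By minimality of the intersection the pieces parallel to an annulus in $D_i$ or $D_{i+1}$ do not occur, so every piece is isotopic to $A_i$, or parallel to the part of $\partial V$ in $H_i$, or parallel to the frontier of an arc of $t_i$; correspondingly, its boundary curve on $D_i$ encloses two, three, or one of the three points $D_i\cap K'$. The core of the argument is then to see which pieces can be glued consistently along the curves in the $D_i$ to close up to a torus, which amounts to matching, on each shared disk $D_{i+1}$, the boundary circle of the piece in $H_i$ with that of the piece in $H_{i+1}$. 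Here I would invoke the incompatibility recorded in the proof of Lemma \ref{Conwaysphere}: an $A_i$-type piece meets $D_{i+1}$ in the curve $\partial D_{i2}$, which is \emph{not} isotopic in $D_{i+1}-K'$ to $\partial D_{(i+1)1}$, the curve in which $A_{i+1}$ meets $D_{i+1}$. Since the only pieces with a boundary circle enclosing two punctures are the $A_j$, an $A$-type piece could only match another $A$-type piece, and this is forbidden by the non-isotopy; thus no $A$-type piece appears. Consequently every boundary circle encloses either one puncture or all three, and the matching condition forces the same type all the way around $T$: if all pieces are parallel to $\partial V$, then $T$ is parallel to $\partial V$, and if all pieces are frontier annuli of single arcs, the tubes follow the strands of $K'$ and, since $K'$ is a single knot, close up to $\partial N(K')$.

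I expect the main obstacle to be the combinatorial gluing analysis in the last step, specifically the careful exclusion of mixed or nested configurations of the annular pieces and the systematic use of the essential-intersection facts from Lemma \ref{Conwaysphere} to rule out the pair-enclosing annuli $A_i$. A secondary point requiring separate attention is the exceptional situation in which some $(H_{i+1},t_{i+1})$ is a braid, where Lemmas \ref{annulus} and \ref{Conwaysphere} carry their ``except perhaps if $H_{i+1}$ is a braid'' caveats; in those cases one must examine the several annuli $A_i$ and their boundary curves directly to check that the same conclusion still holds.
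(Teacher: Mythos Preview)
Your approach is exactly the paper's: intersect $T$ with the $D_i$, reduce to spanning annuli in the $H_i$, invoke Lemma \ref{annulus}, rule out the $A_i$-type pieces by the boundary-curve mismatch already used in Lemma \ref{Conwaysphere}, and conclude that the remaining pieces are all of one type, giving either $\partial V$ or $\partial N(K')$. The paper's proof is much terser but follows the identical outline.

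One internal inconsistency to repair: early on you assert that ``a curve bounding a subdisk meeting $K'$ in a single point cannot occur,'' yet later you (correctly) allow frontier-of-arc pieces whose boundary encloses exactly one puncture. The sphere argument you give only applies to curves that are \emph{inessential on $T$} (so that ``capping off with a disk in $T$'' makes sense); curves essential on $T$ can and do bound once-punctured subdisks of $D_i$, and your final case analysis needs them. Restrict the early claim to curves bounding disks on $T$ and the write-up is consistent.
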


\begin{proof} Let $T$ be an incompressible torus in $V-K'$. Look at the intersection between $T$ and the disks $D_i$. This intersection consists of curves that divide $T$ into annuli, 
which we can suppose are not annuli parallel to an annulus lying in some $D_i$, that is, all are spanning annuli in the $H_i$. If in some $H_i$ there is an annulus which is parallel to an 
annulus $A_i$, then it can be seen that the torus cannot be assembled. Then each such annuli is either an annulus running along an arc of $t_i$, or is parallel to $\partial V$. We 
conclude that the torus is parallel to $\partial V$, or parallel to $\partial N(K')$. \end{proof}

\begin{lemma} \label{annulus2} If $A$ is an annulus properly embedded in $V$, disjoint from $K'$ and incompressible in $V-K'$ then $A$ is parallel to an annulus in $\partial V$. There is no properly embedded M\"obius band in $V$ disjoint from $K'$.\end{lemma}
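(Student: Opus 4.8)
The plan is to run the same machinery used in Lemmas \ref{disks}, \ref{annulus}, \ref{torus} and \ref{Conwaysphere}: study $A\cap(D_1\cup D_2\cup D_3\cup D_4)$, reduce it to a standard form by isotopy, and then reassemble the resulting pieces using the local classification provided by Lemma \ref{annulus}. First I would fix the boundary behaviour. If some component of $\partial A$ bounds a disk in $\partial V$, then (after taking an innermost such disk, which is automatically disjoint from $K'$ since $K'\subset\operatorname{int}V$) one checks that $A$ is already parallel to an annulus in $\partial V$, giving the conclusion directly; so I may assume $\partial A$ consists of two essential curves on $\partial V$, which are then parallel there, and I isotope $A$ to meet $\bigcup\partial D_i$ minimally.

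Next I would clean up $A\cap\bigcup D_i$. Closed curves that are trivial on $A$ bound disks in $A$ disjoint from $K'$, and these are removed exactly as in the proof of Lemma \ref{Conwaysphere}, using Lemma \ref{localknot} (no local knots) to push $A$ across the sphere formed with a subdisk of some $D_i$; a closed curve meeting a hypothetical capping disk an odd number of times is excluded by parity. Closed curves essential on $A$ are core-parallel, hence cut off subannuli lying in single regions $H_j$. After this reduction $A\cap\bigcup D_i$ consists of arcs together with core-parallel circles, and these cut $A$ into pieces, each properly embedded in a single $H_j$, disjoint from $K'$, with boundary on $D_j\cup D_{j+1}\cup\partial V$.

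Now I apply the local classification. Disk pieces are governed by Lemma \ref{disks} and spanning-annulus pieces by Lemma \ref{annulus}: every annulus piece is parallel to an annulus in $D_j$ or $D_{j+1}$, or isotopic to $A_j$, or parallel to the part of $\partial V$ in $H_j$, or parallel to the frontier of an arc of $t_j$. Pieces parallel into a $D_j$ are removed by reducing the intersection. The $A_j$-type pieces are ruled out exactly as in Lemma \ref{torus}: since $\partial D_{j2}$ meets $\partial D_{(j+1)1}$ and $\partial A_{j+1}$ essentially on $D_{j+1}$, such pieces cannot be matched across the disks. A nonempty collection of frontier-of-arc pieces would glue up only into a closed surface parallel to $\partial N(K')$, as in Lemma \ref{torus}, which is impossible for a surface with boundary on $\partial V$. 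This leaves only pieces parallel to portions of $\partial V$, forcing $A$ to be parallel to an annulus in $\partial V$. I expect the reassembly to be the main obstacle: the combinatorial bookkeeping of how the boundaries of the pieces match along each $D_i$, and in particular the exclusion of mixed $A_i$/frontier-of-arc configurations, is the delicate point, and it is precisely the intersection data of $\partial D_{i2}$ with $\partial D_{(i+1)1}$ and $\partial A_{i+1}$ recorded before Lemma \ref{wrapping} that makes it manageable.

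Finally, the Möbius band. Given a properly embedded Möbius band $M\subset V$ disjoint from $K'$ with core $\kappa$, let $\hat A$ be the frontier of a regular neighborhood $N(M)$; this is a properly embedded annulus disjoint from $K'$ whose core represents $2[\kappa]\in H_1(V)\cong\mathbb{Z}$. First note $[\kappa]\neq0$: if $[\kappa]=0$ then $[\partial M]=0$, so $\partial M$ bounds a disk in $V$ and $M$ caps off to an embedded $\mathbb{RP}^2$ in $S^3$, which is impossible. If $\hat A$ were compressible in $V-K'$, its core would bound a disk, forcing $2[\kappa]=0$ and hence $[\kappa]=0$, contradiction; so $\hat A$ is incompressible. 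By the first part $\hat A$ is parallel to an annulus in $\partial V$, and since $N(M)$ is a twisted $I$-bundle (which is not a product region for $\hat A$), the parallelism region must be the complementary side $W=\overline{V\setminus N(M)}$. Thus $W$ is a product collar of $\partial V$ containing $K'$, so $K'$ is isotopic into $\partial V$; this makes its wrapping number equal to its winding number, contradicting Lemma \ref{wrapping} (equivalently, it would make $\partial N(K')$ parallel to $\partial V$, which is false for the explicitly tangled pattern of Figure \ref{pattern}). Hence no such $M$ exists.
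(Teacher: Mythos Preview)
Your overall plan---cut $A$ along the $D_i$ and classify the pieces---is exactly what the paper does, but the local classification you invoke is the wrong one. Lemma~\ref{disks} concerns disks with boundary \emph{entirely} in a single $D_i$ that meet $K'$ in two points, and Lemma~\ref{annulus} concerns annuli with boundary in $D_i\cup D_{i+1}$; neither applies to the pieces of $A$, which are rectangles with two opposite sides on $\partial V$ and two on meridian disks, disjoint from $K'$. Those rectangles require their own analysis. The paper's point is that a rectangle in $H_i$ with sides on $\partial V$ that is \emph{not} parallel into $\partial V$ must separate the strands of $t_i$, and this is only possible when $(H_i,t_i)$ is a genuine $3$--braid. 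Chasing this through all four regions forces $K'$ itself to be a closed $3$--braid in $V$, which happens only for finitely many parameter values ($m=\pm1$, $n=\pm1$, $p=0$, or $m=\pm1$ together with $(p,q,n)\in\{(1,3,1),(-1,-3,-1)\}$); one then checks these eight braids are not boundary--parallel. This braid-case bifurcation is the heart of the argument and is entirely missing from your proposal---the reassembly obstruction you cite (intersection of $\partial D_{i2}$ with $\partial D_{(i+1)1}$ and $\partial A_{i+1}$) is relevant to Lemmas~\ref{Conwaysphere} and~\ref{torus}, not to rectangles touching $\partial V$.

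For the M\"obius band, your reduction via the frontier annulus $\hat A$ is a sensible alternative to the paper's one-line appeal to an incompressible torus, but the final step does not go through. From ``$W=\overline{V\setminus N(M)}$ is a product $\hat A\times I$ and $K'\subset W$'' you cannot conclude that $K'$ is isotopic into $\partial V$: $W$ is merely a solid torus sitting along $\partial V$, and $K'$ may be arbitrarily knotted inside it. Moreover, even if $K'$ \emph{were} isotopic to a curve on $\partial V$, that curve could be a $(p,3)$--curve, giving wrapping number $3$ equal to $|\text{winding}|=3$, so the appeal to Lemma~\ref{wrapping} fails in the $m,n$ odd, $p$ even case. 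You would need a different contradiction here---for instance, that a closed $3$--braid lying on $\partial V$ makes $V-K'$ Seifert fibered, which one can rule out in the explicit cases.
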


\begin{proof} Let $A$ be an incompressible annulus in $V-K'$. Look at the intersection between $A$ and the disks $D_i$. This intersection consists of arcs, which we can assume divide 
the annulus into rectangles. That is, the intersection of $A$ with each $H_i$ consists of one of more rectangles. But these rectangles cannot be assembled to form an annulus, except 
perhaps if each tangle  $H_i$ is a braid, that is, if $K'$ is a closed braid in $V$ and that $K'$ is parallel to a curve on $\partial V$. $K'$ is a closed braid only when $m=\pm 1$, $n=\pm 
1$ and $p=0$, or when  $m=\pm 1$, and  $p=1,q=3,n=1$, or when $m=\pm 1$, and $p=-1,q=-3, n=-1$. This gives a total of 8 closed braids, and it can be checked that none of them is parallel to a curve contained in $\partial V$. If there is a M\"obius band disjoint from $K'$, then there is also an incompressible torus disjoint from $K'$, which is not possible.\end{proof}

Let $J$ be a $(2,-q)$-torus knot, and let $h: V \rightarrow N(J)$ be a homeomorphism such that a preferred longitude $\lambda \subset V$ is mapped to a longitude $l$ of $N(J)$ of 
slope $-2q$. Let $K=K(m,n,p;q)$ be the satellite knot $h(K')$, and let $Q=\partial N(J)$ be the satellite torus. See Figure \ref{satellite}, which shows the case $q=3$.

\begin{figure}

\includegraphics[angle=0, width=7true cm]{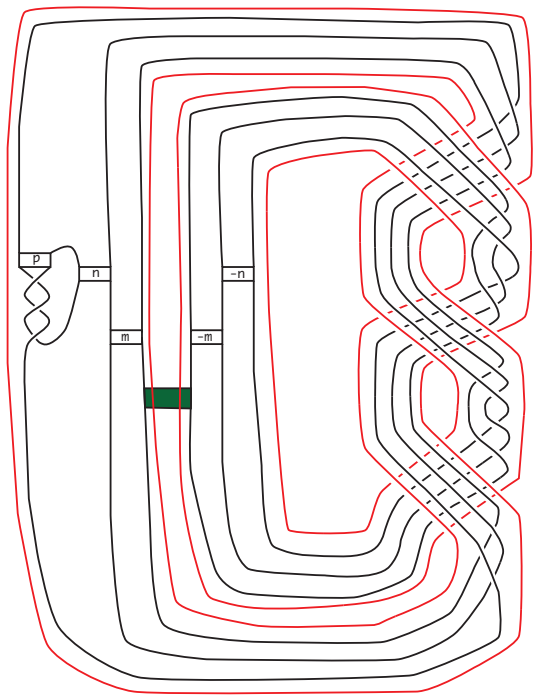}

\caption{The satellite knot.}
\label{satellite}

\end{figure}
 
Let $b:I\times I \rightarrow S^3$ be the banding of $K(m,n,p;q)$ shown in Figure \ref{satellite}, an let $K_b$ be the knot so obtained.

\begin{proposition} \label{trivialknot} The knot $K_b$ is a trivial knot.
\end{proposition}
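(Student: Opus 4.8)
The plan is to make the band move completely explicit and then to trivialize $K_b$ by an isotopy organized by the satellite torus $Q$. First I would record the exact position of the band $b$ from Figure \ref{satellite}: its two feet $b(\partial I\times I)$ lie on $K$ inside $N(J)$, while $b(I\times I)$ meets $Q$ in two arcs, so the ``middle'' rectangle of the band lies in the exterior $W=S^3-\operatorname{int}N(J)$ of the companion torus knot $J$, and the two remaining strips lie in $N(J)$. The governing structural fact is that $h$ sends the preferred longitude $\lambda$ to the slope $-2q$ on $Q$, which is exactly the cabling slope of the $(2,-q)$-torus knot; hence the cabling annulus of $J$ (the part of the Heegaard torus carrying $J$ that lies in $W$) meets $Q$ in curves parallel to $h(\lambda)$, and the outside portion of $b$ can be arranged to run alongside this annulus. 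This alignment is what lets the band interact with, and ultimately cancel, the torus-knot wrapping.

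I would then carry out the reduction in two stages. Stage one is an isotopy supported in $W$: using the cabling annulus (equivalently, the Seifert fibration of the torus-knot exterior), I slide the two strands of $K_b$ that now leave $N(J)$, together with the outside rectangle of the band, across $Q$. Because these strands run parallel to the cabling slope $-2q=h(\lambda)$, this move removes the two arcs of intersection with $Q$ and simultaneously unwinds the $(2,-q)$ wrapping, so that $K_b$ comes to lie in a ball meeting $Q$ in a controlled way. Stage two is internal to $V$: what remains is the pattern $K'$ modified near the feet of the band by the inside strips $b(I\times I)\cap N(J)$, which effect a tangle replacement on the regions $H_i$. Using the rational-tangle descriptions $H_1=R((npq+n+p)/(pq+1))$, $H_2=R(m)$, $H_3=R(-n)$, $H_4=R(-m)$ (each with its trivial arc) and Conway's rational tangle calculus, I would compute the resulting tangle and check that it collapses to a trivial tangle, i.e.\ that after the band move and the stage-one slide the pattern becomes a single unknotted arc. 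This is precisely where the fraction $(npq+n+p)/(pq+1)$ in $H_1$ is arranged so as to absorb the contributions of $R(m)$, $R(-n)$, $R(-m)$ together with the framing picked up in stage one.

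The main obstacle is the bookkeeping that joins the two stages. One must verify, first, that the stage-one slide across $Q$ is genuinely unobstructed by $K'$; here Lemmas \ref{disks}, \ref{annulus} and \ref{torus} are the natural tools, since they rule out any essential disk, annulus or torus in $V-K'$ that could block the isotopy, so that the only way the strands interact with $Q$ is through the band itself. Second, one must confirm that the framing introduced by pushing across the cabling annulus matches the framing consumed by the tangle arithmetic inside $V$, which is the role of the relation between $p$, $q$ and the slope $-2q$. Once these two computations agree, $K_b$ is exhibited as the boundary of an embedded disk and is therefore trivial. I expect the delicate point to be the careful tracking of signs and of the vertical $[q]$ twisting, so that the rational-tangle sum in stage two indeed telescopes to the trivial tangle.
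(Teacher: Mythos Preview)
Your high-level observation that the outside of the band runs along the cabling slope $-2q$ is correct and is indeed what drives the unknotting. But the proposal has two genuine gaps. First, the invocation of Lemmas~\ref{disks}, \ref{annulus} and \ref{torus} is misplaced: those lemmas classify surfaces in the pair $(V,K')$, whereas after the band move you are isotoping $K_b$, a different knot, in $S^3$; an ambient isotopy of a knot is never ``obstructed'' by incompressible surfaces in the complement of some other knot, so these lemmas play no role here. Second, and more seriously, the entire content of the argument is the stage-two tangle computation, which you do not carry out. The band is not local to any single $H_i$, so both the band move and your stage-one slide disturb the $H_i$ decomposition in a way you have not described; asserting that the fraction $(npq+n+p)/(pq+1)$ is ``arranged to absorb'' the contributions of $R(m)$, $R(-n)$, $R(-m)$ and the stage-one framing is precisely the statement to be proved, not a proof of it.

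The paper sidesteps both difficulties with a single global observation that replaces your tangle arithmetic by visible crossing cancellations. The pattern diagram lies on an annulus $A\subset V$ with $\partial A$ a pair of preferred longitudes; since $h(\lambda)$ has slope $-2q$, one boundary component of $h(A)$ matches the boundary of the M\"obius band $\mathcal{M}\subset E(J)$ of the $(2,-q)$-torus knot, and $\mathcal{M}'=\mathcal{M}\cup h(A)$ is a M\"obius band carrying a diagram of $K$ \emph{together with} the band $b$. All of the isotopy then takes place diagrammatically on $\mathcal{M}'$: after the band move the boxes $m$ and $-m$ slide together and cancel, then $n$ and $-n$, the box $q$ cancels against the half-twist contributed by $\mathcal{M}$, and finally the $p$ box untwists. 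No rational-tangle calculus or framing bookkeeping is needed.
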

\begin{proof} To see this in an easy way, note that there is a M\"obius band $\mathcal{M}$ properly embedded in the exterior of $J$ with slope $-2q$. Let $A$ be an annulus embedded in $V$ whose boundary consists of preferred longitudes of $V$, such that a diagram of $K'$ can be drawn on $A$. Consider now $h(A)$ and note that one of its boundary components coincide with $\partial \mathcal{M}$. Then $\mathcal{M}'=\mathcal{M}\cup h(A)$ is a M\"obius band which contains a diagram of $K(m,n,p;q)$. Consider the band $b$ shown in Figure \ref{satellite} and note that it is contained in $\mathcal{M}'$, and then we have a diagram of $K_b$ on $\mathcal{M}'$. Following Figure \ref{unknot}, do isotopies of $K_b$ along $\mathcal{M}'$ to undo the crossing in the boxes $m$, $-m$, then undo the crossings in the boxes $n$ and $-n$, until we get to the penultimate diagram of Figure \ref{unknot}. There the crossings in the box labelled with $q$ cancel with the crossings introduced by the M\"obius band, and then just undo the crossings in the box labeled by $p$.
\end{proof}

\begin{figure}

\includegraphics[angle=0, width=10true cm]{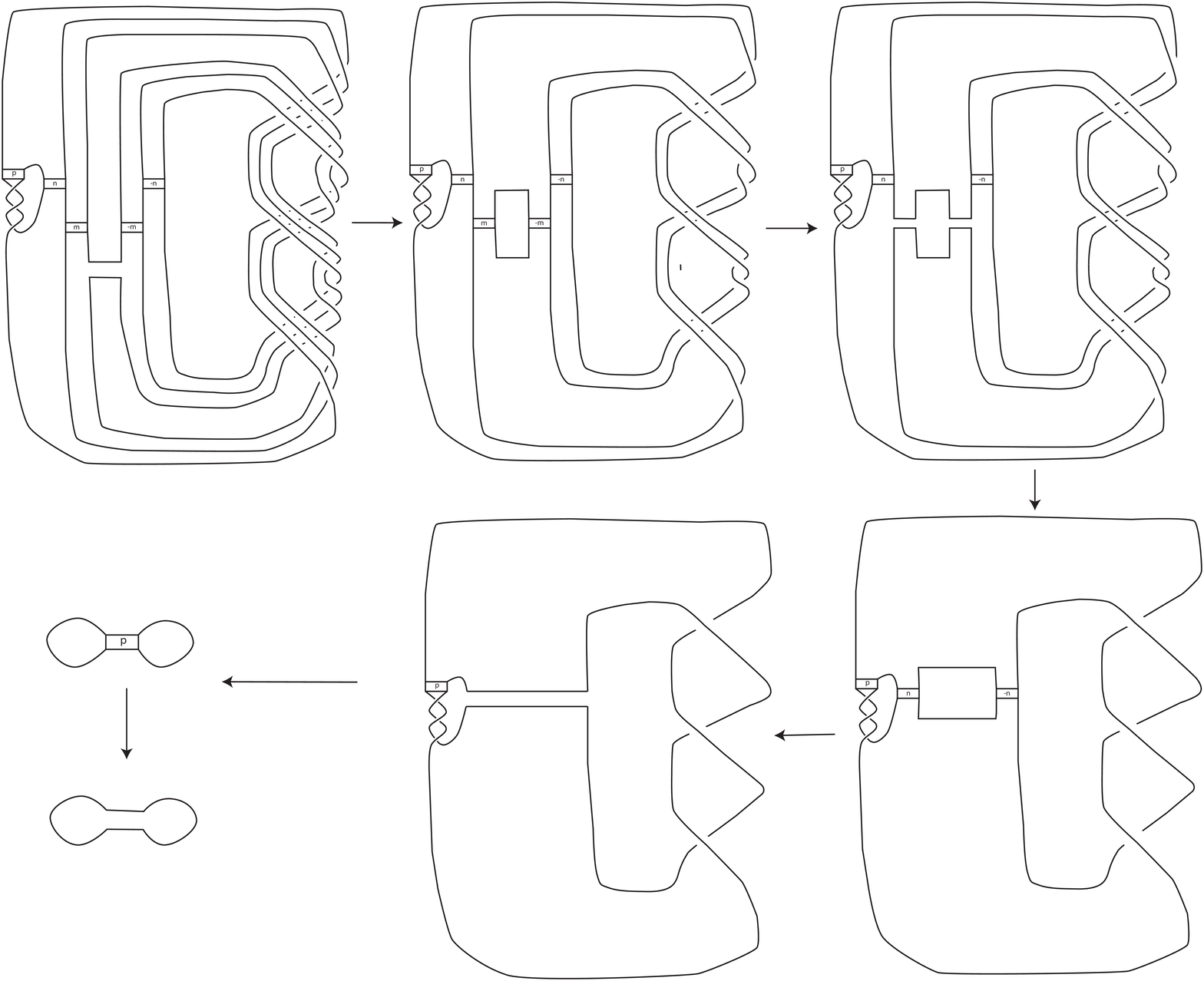}

\caption{Obtaining the trivial knot.}
\label{unknot}

\end{figure}

Now we show that the band cannot be isotoped to be disjoint from $Q$, in fact we can show more.

\begin{proposition} \label{bandnodisjoint} No banding of $K(m,n,p;q)$ that produces a trivial knot can be disjoint from $Q$.
\end{proposition}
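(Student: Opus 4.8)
The plan is to argue by contradiction, exploiting the parity of the winding number together with the nontriviality of the companion $J$. Suppose $b$ is a band whose feet lie on $K=K(m,n,p;q)$, with $K_b$ trivial, and suppose $b$ is disjoint from $Q=\partial N(J)$. The first observation I would make is that $b$ is then forced to lie inside $N(J)$: the band $b(I\times I)$ is connected and its feet $b(\partial I\times I)$ lie on $K\subset \operatorname{int} N(J)$, so a band disjoint from the separating torus $Q$ cannot escape into the exterior $E(J)=S^3\setminus \operatorname{int} N(J)$. Hence the banding happens entirely inside $N(J)$, and pulling it back by the homeomorphism $h$ yields a band $b'=h^{-1}(b)$ in $V$ with $K_b=h(K'_{b'})$.

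The heart of the argument is a winding-number computation, and here I would let $w(\cdot)$ denote the winding number in the relevant solid torus. Since $h$ carries a meridian disk of $V$ to a meridian disk of $N(J)$, it preserves winding numbers, so $w(K_b)=w(K'_{b'})$. Now band surgery never changes the class in $H_1(\,\cdot\,;\mathbb{Z}/2)$: the symmetric difference of $K'$ and $K'_{b'}$ is exactly $b'(\partial(I\times I))$, the boundary of the band rectangle, which is null-homologous, so $[K'_{b'}]=[K']$ in $H_1(V;\mathbb{Z}/2)=\mathbb{Z}/2$. Because $w(K')=\pm1$ or $\pm3$ is odd, the class $[K']$ is nonzero, hence $w(K'_{b'})$ is odd and in particular nonzero. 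Therefore $K_b$ is a knot lying in the solid torus $N(J)$ with odd, nonzero winding number.

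To finish I would invoke the geometry of knotted solid tori. A knot in $N(J)$ with nonzero winding number cannot lie in any $3$-ball in $N(J)$, so $K_b$ is geometrically essential. Since $J$ is nontrivial, $Q$ is incompressible toward the $E(J)$ side, and essentiality of $K_b$ prevents any meridian disk of $N(J)$ from missing $K_b$, so $Q$ is also incompressible toward the $N(J)$ side; thus $Q$ is essential in $S^3\setminus K_b$. If $K_b$ were trivial, then $S^3\setminus K_b$ would be a solid torus, which contains no essential torus, a contradiction. Hence $K_b$ is nontrivial, contradicting the hypothesis.

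I expect the main obstacle to be exactly this last topological step: verifying that a knot winding a nonzero number of times inside the \emph{knotted} solid torus $N(J)$ must be nontrivial (equivalently, that $Q$ is essential in its complement and cannot be boundary-parallel in a solid torus without forcing $J$ to be trivial). The localization of the band inside $N(J)$ and the $\mathbb{Z}/2$-homology parity are routine; the real content is in using the nontriviality of the companion $J$ to rule out $K_b$ being unknotted despite its odd winding number.
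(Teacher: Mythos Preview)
Your proof is correct and follows essentially the same route as the paper's: localize the band inside $N(J)$, use a parity argument to show the banded knot still has odd winding number in $N(J)$, and conclude that $Q$ is incompressible in its exterior, contradicting triviality. The only cosmetic difference is that the paper phrases the parity step geometrically (the wrapping number is $3$, so after banding the result meets a meridian disk in $3+2r$ points), whereas you phrase it homologically via $H_1(V;\mathbb{Z}/2)$.
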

\begin{proof}
Suppose that $b$ is a band for  $K=K(m,n,p;q)$ which produces by banding a trivial knot $U$ and that $b$ is disjoint from $Q$. Give an orientation to $K$. Remember that  the wrapping number of $K$ in the solid torus $h(V)$ is $3$. Then $b$ intersects a meridian 
disk of $h(V)$ in a collection of $r$ arcs. So $U$ intersects a meridian disk of $h(V)$ in $3 + 2r$ points. Then the winding number of $U$ in $h(V)$ is an odd number. and then $U$ cannot be inside a $3$-ball contained in $h(V)$. It follows that $Q$ is incompressible in the exterior of $U$, then $U$ is not trivial.
\end{proof}

\begin{lemma} \label{uniquetorus} The torus $Q$ is the only incompressible torus in the exterior of $K$, and there is no Conway sphere for $K$.
\end{lemma}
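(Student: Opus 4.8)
The plan is to analyze how a hypothetical essential torus or Conway sphere meets the satellite torus $Q$ and to push all the difficulty back inside $V$, where Lemmas~\ref{torus}, \ref{annulus2} and \ref{Conwaysphere} take over. Recall that $Q$ cuts $S^3$ into the solid torus $h(V)$, which contains $K$, and the companion exterior $E(J)=S^3-\operatorname{int}N(J)$. Since $J$ is a $(2,-q)$-torus knot, $E(J)$ is Seifert fibered over a disk with two exceptional fibers; consequently $Q$ is incompressible in $E(J)$, $E(J)$ contains no essential torus, and every essential planar surface properly embedded in $E(J)$ with boundary on $Q$ is an annulus. First I would record that $Q$ is incompressible and not boundary parallel in the exterior of $K$: a compressing disk on the $h(V)$ side would be a meridian disk of $h(V)$ missing $K$, which is impossible because the wrapping number of $K'$ is $3$ by Lemma~\ref{wrapping}, while a compression on the $E(J)$ side is excluded since $J$ is nontrivial.

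Next, let $T$ be an incompressible torus in the exterior of $K$, isotoped to meet $Q$ in as few curves as possible. If $T\cap Q=\emptyset$, then $T$ lies in $h(V)-K$ or in $E(J)$; in the first case Lemma~\ref{torus} makes $T$ parallel to $\partial h(V)=Q$ or to $\partial N(K)$, and in the second case the atoroidality of the torus-knot exterior makes $T$ boundary parallel. Thus $T$ is boundary parallel or isotopic to $Q$. If $T\cap Q\neq\emptyset$, then after the usual removal of trivial curves (using that $Q$ and $T$ are incompressible) the intersection consists of parallel essential curves on $Q$, cutting $T$ into annuli that lie alternately in $h(V)-K$ and in $E(J)$. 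By Lemma~\ref{annulus2} every annulus of $T\cap h(V)$ is parallel to an annulus in $Q$; choosing an innermost such parallelism, the product region it bounds has interior disjoint from $T$, so that annulus can be pushed across $Q$ into $E(J)$, strictly lowering $|T\cap Q|$. This contradicts minimality, so in fact $T\cap Q=\emptyset$ and $Q$ is the only incompressible torus.

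For the Conway sphere I would argue in the same spirit. Suppose $S$ meets $K$ in four points with $S-K$ essential, and minimize $|S\cap Q|$. An innermost curve of $S\cap Q$ that is trivial on $Q$ bounds a disk in $Q$ disjoint from $K$; since $S-K$ is incompressible, this disk together with a subdisk of $S$ carrying at most one puncture removes the curve, so the surviving curves are essential on $Q$. They split $S$ into planar pieces, those in $E(J)$ being disjoint from $K$; as $Q$ is incompressible in $E(J)$ none of these is a disk, so on the companion side only annuli of one fixed slope occur. On the $h(V)$ side every piece that is an annulus disjoint from $K$ is boundary parallel by Lemma~\ref{annulus2}, and after pushing such annuli (and any boundary-parallel companion annulus) across $Q$ the intersection with $Q$ is reduced; keeping track of the four punctures, one is driven to the case $S\cap Q=\emptyset$. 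Then, since $K\subset h(V)$, the sphere $S$ lies in $h(V)$ and meets $K$ in four points, so $h^{-1}(S)$ is a Conway sphere for $K'$ in $V$, contradicting Lemma~\ref{Conwaysphere}.

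The main obstacle is the case $T\cap Q\neq\emptyset$ and its Conway-sphere analogue: one has to be sure that the boundary-parallel annuli produced by Lemma~\ref{annulus2} can always be selected innermost so as to genuinely decrease the intersection, and, for the sphere, that the distribution of the four punctures among the $h(V)$-pieces together with the fixed-slope annuli forced on the $E(J)$ side never leaves a configuration that resists reduction. This is bookkeeping built on the earlier lemmas rather than a new idea, but it is where the care is needed.
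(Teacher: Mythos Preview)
Your treatment of the essential torus is correct and essentially parallel to the paper's: both arguments reduce $|T\cap Q|$ via boundary-parallel annuli and then invoke Lemma~\ref{torus}. The only cosmetic difference is that the paper pushes from the $E(J)$ side (an essential annulus there has slope $-2q$, forcing the adjacent $h(V)$-annulus to be boundary parallel), whereas you push directly from the $h(V)$ side using Lemma~\ref{annulus2}. Either works.

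The Conway sphere argument, however, has a genuine gap. Your reduction scheme only moves pieces of $S\cap h(V)$ that are \emph{annuli disjoint from $K$}; it says nothing about the two innermost disks of $S$, which, as you yourself observe, must lie in $h(V)$ (since a disk on the $E(J)$ side would compress $Q$). These disk pieces carry all four punctures among them, they are not annuli, and Lemma~\ref{annulus2} does not apply to them; so your ``keeping track of the four punctures'' step is exactly the step that is missing, not mere bookkeeping. The paper closes this case in one line with the observation you did not use: an innermost curve of $S\cap Q$, being essential on $Q$, bounds a meridian disk of $h(V)$ inside $S$ that meets $K$ in at most two points, and this contradicts the wrapping number $3$ from Lemma~\ref{wrapping}. (Equivalently: both innermost disks lie in $h(V)$ and each must meet $K$ at least three times, forcing at least six punctures on a four-punctured sphere.) Once you insert this wrapping-number count, the annulus pushing becomes unnecessary and the argument is complete.
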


\begin{proof} Suppose that $P$ is a Conway sphere for $K$. Consider 
first the intersections between $P$ and $Q$. We can assume that the intersection consists of curves that are essential on both $P$ and $Q$. Let $\gamma$ be an 
innermost curve of intersection in $P$.  Then $\gamma$ bounds a disk $D$ in $P$ containing one or two points of intersection with $K$. Because the wrapping number 
of $K$ in $h(V)$ is $3$, this is not possible. Then $P$ is disjoint from $Q$, and it must be contained in $h(V)$. By Lemma \ref{Conwaysphere} this is not possible.

Suppose now that $P$ is an essential torus not isotopic to $Q$. Again look at the intersection between $P$ and $Q$, which then consists of curves which are essential in both $P$ and 
$Q$. Let $\gamma_1$ and $\gamma_2$ be a pair of intersection curves that bound an annulus $A$ in $P$ with interior disjoint from $Q$ and disjoint from $N(J)$. If $A$ 
is a boundary parallel annulus in the exterior of $J$, $E(J)$, an isotopy reduces the number of arcs of intersection. If $A$ is essential in $E(J)$, then its boundary consists of curves of slope $-2q$ on $Q$.
Then the annulus $A'$ in $P$ adjacent to $A$ is boundary parallel in $h(V)$. By an isotopy the number of arcs of intersection are reduced. Then $P$ is disjoint from $Q$.
As $J$ is a torus knot, there are not essential tori in its complement. Then $P$ is contained in $V$. By Lemma \ref{torus} this is not possible. \end{proof}

\section{ Dehn surgery}.
\label{sec3}

Let $K=K(m,n,p;q)$ be the knot or link defined in the previous section and let $b$ the band defined in Proposition \ref{trivialknot}. Let $B'=N(b)$. Then $B'$ is a 3-ball that intersects $K$  in two arcs. Let $K_b$ be the banding of $K$, so $K_b$ is the trivial knot; $K_b$ intersects $B'$ 
also in two arcs. Let $B$ be the complementary ball of $B'$ in $S^3$, and let $t=K\cap B$, so $t$ consists of two arcs properly embedded in $B$. That is, the pair $(B,t)$ 
can be considered as a 2-tangle, and the knots $K$ and $K_b$ can be considered as obtained by filling $B$ with rational tangles.

Let $\pi:S^3 \rightarrow S^3$ be the the double cover of $S^3$ branched along $K_b$. As $K_b$ is a trivial knot, the double branched cover is the 3-sphere.
Let $N=\pi^{-1}(N(b))$. Then $N$ is a solid torus, its core is a knot in $S^3$ which we denote by $\tilde K = \tilde K(m,n,p;q)$. By Montesinos trick, the double 
branched cover of $K$ is then obtained by performing $r$-Dehn surgery on $\tilde K$ for some integral slope $r$; this surgered manifold is denoted by $\tilde K(r)$.  
Let $\tilde \pi= \tilde K(r) \rightarrow S^3$ be the corresponding branched cover. Let $\tilde V=\tilde \pi^{-1}(V)$, and let $\tilde M=\tilde \pi^{-1} (E(J))$, and let
$T=\tilde \pi^{-1} (Q)$.
Note that because the winding number of $K$ in $h(V)$ is an odd number, $T$ is then a single torus which is  a double cover of $Q$. The torus 
$Q$ intersects $B'$ in two disks, which then implies that $T$ intersects $N$ in four disks.
Then $\tilde T = T \cap E(\tilde K)$ is a four-punctured torus properly embedded in the exterior of $\tilde K$.

\begin{lemma} \label{JSJ} The $JSJ$ decomposition of $\tilde K(r)$ is given by $\tilde K(r)=\tilde V \cup \tilde M$, where $\tilde V$ is a hyperbolic manifold and $\tilde M$ is a Seifert fibered space.
\end{lemma}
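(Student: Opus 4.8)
The plan is to show that the given decomposition $\tilde K(r)=\tilde V\cup \tilde M$ is the JSJ decomposition by verifying three things: that $T$ is the unique essential (incompressible, non-boundary-parallel) torus in $\tilde K(r)$, that $\tilde M$ is Seifert fibered, and that $\tilde V$ is hyperbolic (atoroidal, anannular, and irreducible with infinite fundamental group). Since $T=\tilde\pi^{-1}(Q)$ is the lift of the satellite torus, it decomposes $\tilde K(r)$ into $\tilde M=\tilde\pi^{-1}(E(J))$, the branched cover piece over the torus-knot exterior, and $\tilde V=\tilde\pi^{-1}(V)$ capped off by the surgery, the branched cover piece over the pattern solid torus. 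First I would confirm $T$ is incompressible in $\tilde K(r)$; this should follow by pulling down a compressing disk via $\tilde\pi$ to contradict the incompressibility of $Q$ in the exterior of $K$ established in Lemma~\ref{uniquetorus}, together with the fact that $T$ is a connected double cover of $Q$.

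Next I would identify the two pieces. For $\tilde M=\tilde\pi^{-1}(E(J))$: since $J$ is a $(2,-q)$-torus knot, its exterior $E(J)$ is Seifert fibered, and a branched double cover of a Seifert fibered space branched over a fiber-respecting link (here the branch set meets $E(J)$ in the arcs $t$ lying appropriately) is again Seifert fibered. I would make this precise by exhibiting the Seifert fibration directly, or by invoking the standard fact that $\tilde M$ is the double cover of the torus knot exterior. For $\tilde V$: I would argue hyperbolicity by ruling out essential spheres, disks, tori, and annuli. Irreducibility and the absence of essential annuli and tori in $\tilde V$ should be deducible from the corresponding statements for the pattern $(V,K')$ proved in Lemmas~\ref{wrapping}–\ref{annulus2}: an essential sphere, annulus, Conway sphere, or incompressible torus in $\tilde V$ would project under $\tilde\pi$ (after equivariant positioning with respect to the covering involution) to a corresponding essential surface in $V-K'$ or to a Conway sphere for $K'$ in $V$, contradicting Lemmas~\ref{Conwaysphere}, \ref{torus}, \ref{annulus2}.

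The main obstacle, and the step I would spend the most care on, is establishing atoroidality and anannularity of $\tilde V$ by the equivariant descent argument. The technical content is that every essential surface in $\tilde V$ can be isotoped to be equivariant under the branched-covering involution, so that it descends to a surface (possibly with orbifold/branching behavior) downstairs; controlling how the branch arcs $t$ meet the descended surface, and checking that the only possibilities are exactly those excluded by the earlier lemmas about the tangle decomposition $H_1\cup\cdots\cup H_4$, is where the real work lies. I would invoke equivariant Dehn's lemma and the equivariant torus/annulus theorems (in the form suitable for involutions on $3$-manifolds) to guarantee the equivariant positioning. Once $\tilde V$ is shown to be irreducible, atoroidal, and anannular with nontrivial fundamental group, Thurston's hyperbolization for Haken manifolds gives that $\tilde V$ is hyperbolic.

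Finally, I would assemble the pieces: since $T$ is essential and separates $\tilde K(r)$ into the hyperbolic piece $\tilde V$ and the Seifert fibered piece $\tilde M$, and since by the uniqueness argument there are no other essential tori (any such torus could be isotoped off $T$ and into one of the two pieces, where it is excluded), the decomposition along $T$ is precisely the JSJ decomposition. I would note that $T$ cannot be boundary-parallel in either piece because the winding number condition forces $T$ to be a genuine essential separating torus, completing the identification.
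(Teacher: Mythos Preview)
Your approach is essentially the paper's: use equivariant surface theorems to descend any essential sphere, disk, annulus, or torus in $\tilde V$ to a surface in $(V,K')$ ruled out by Lemmas~\ref{wrapping}--\ref{annulus2}, then apply Thurston hyperbolization; and identify $\tilde M$ as a Seifert piece coming from the torus-knot exterior. Two remarks. First, a factual slip: the branch locus $K$ lies entirely in $h(V)=N(J)$, so it does \emph{not} meet $E(J)$ at all; thus $\tilde M$ is the \emph{unbranched} double cyclic cover of $E(J)$, not a branched cover over ``arcs $t$ lying appropriately''. Your alternative clause (``the double cover of the torus knot exterior'') is the correct identification, and the paper computes it explicitly as $(D^2;\frac{(q-1)/2}{q},\frac{(q-1)/2}{q})$. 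Second, your plan to separately verify that $T$ is incompressible and unique is more than is needed: once $\tilde M$ is Seifert fibered over the disk with two exceptional fibers and $\tilde V$ is hyperbolic, both have incompressible torus boundary and the JSJ decomposition is automatic.
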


\begin{proof} The manifold $\tilde M$ is the double cyclic cover of the exterior of the $(2,-q)$ torus knot $J$, and then by \cite{RN} is the 
Seifert fibered manifold $(D^2; \frac{(q-1)/2}{q},\frac{(q-1)/2}{q})$.

If $\tilde V$ is not an hyperbolic manifold, then by Thurston Hyperbolicity Theorem, $\tilde V$ will have a compressing disk, a reducing sphere, an essential annulus or an essential 
torus. If there is any of these surfaces, by one of the several equivariant results \cite{KT}, \cite{K}, \cite{H}, there is a surface $S$ which is equivariant, that is, $\tau(S)=S$, or 
$\tau(S)\cap S= \emptyset$, where $\tau$ is the involution defined on $\tilde V$ by the double branched cover. By taking the projection $\tilde \pi(S)$, there will be a compression disk for $\partial V$ disjoint from $K'$, a meridian disk intersecting $K'$ in one point, a decomposing sphere for $K'$, a meridian disk intersecting $K'$ in two points, an essential annulus or M\"obius band, or an essential torus or Conway sphere for $K'$. None of these can exist by Lemmas \ref{wrapping}, \ref{localknot}, \ref{Conwaysphere}, \ref{torus} or \ref{annulus2},.  \end{proof}

\begin{lemma} \label{hyperbolic} The knot $\tilde K=\tilde K(m,n,p;q)$ is hyperbolic.
\end{lemma}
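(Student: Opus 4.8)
The plan is to invoke the Thurston trichotomy: every nontrivial knot in $S^3$ is a torus knot, a satellite knot, or a hyperbolic knot. So I would show that $\tilde K$ is nontrivial, is not a torus knot, and is not a satellite knot; hyperbolicity then follows. First, $\tilde K$ is nontrivial, for otherwise $\tilde K(r)$ would be a lens space, $S^3$, or $S^1\times S^2$, none of which is toroidal, contradicting Lemma \ref{JSJ}, which exhibits the essential torus $T$ in the decomposition $\tilde K(r)=\tilde V\cup_T\tilde M$. Next, $\tilde K$ is not a torus knot: by Moser's classification every Dehn surgery on a torus knot yields a reducible manifold, a lens space, or a Seifert fibered space over $S^2$ with at most three exceptional fibers, and none of these is toroidal, again contradicting Lemma \ref{JSJ}.

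The remaining and main point is to prove that $\tilde K$ is not a satellite knot, i.e.\ that $E(\tilde K)$ is atoroidal. Here I would use that $E(\tilde K)=\pi^{-1}(B)$ is the double cover of the ball $B$ branched over the tangle $t$, with covering involution $\tau$. Suppose $\hat T$ is an essential torus in $E(\tilde K)$. By the equivariant torus theorem (as in the references \cite{KT}, \cite{K}, \cite{H} used for Lemma \ref{JSJ}) I may isotope $\hat T$ so that either $\tau(\hat T)=\hat T$ or $\tau(\hat T)\cap\hat T=\emptyset$. Projecting by $\pi$ and then capping $B$ with the rational tangle that recovers $(S^3,K)$, the torus $\hat T$ descends to a closed surface for $K$ in $S^3$: when $\tau(\hat T)\cap\hat T=\emptyset$ it descends to a torus disjoint from $K$, and when $\tau(\hat T)=\hat T$ a Riemann--Hurwitz count ($0=\chi(\hat T)=2\chi(\bar T)-|\hat T\cap\mathrm{Fix}(\tau)|$) forces the quotient to be either a torus disjoint from $K$ or a sphere meeting $K$ in four points, i.e.\ a Conway sphere.

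To finish, I would rule out both outcomes by Lemma \ref{uniquetorus}. A descended Conway sphere is excluded outright, since essentiality of $\hat T$ lifts to incompressibility of its quotient and Lemma \ref{uniquetorus} asserts $K$ has no Conway sphere. A descended essential torus must be isotopic to $Q$, again by Lemma \ref{uniquetorus}; but then $\hat T$ would be isotopic in $E(\tilde K)$ to a closed torus lying over $Q$, whereas the preimage of $Q$ inside $E(\tilde K)$ is exactly the four-punctured torus $\tilde T=T\cap E(\tilde K)$, which contains no closed torus. (If instead the descended torus were compressible or boundary-parallel in $E(K)$, the compression or the parallelism into $\partial N(K)$ would lift, contradicting that $\hat T$ is essential.) Hence $E(\tilde K)$ is atoroidal, so $\tilde K$ is neither a satellite nor, since satellites include cable and composite knots whose exteriors are toroidal, a cable or composite knot; combined with the first two steps, the trichotomy gives that $\tilde K$ is hyperbolic.

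I expect the main obstacle to be the equivariant/projection bookkeeping in the satellite step: one must verify carefully that essentiality transfers in both directions across the branched cover (so that an essential $\hat T$ yields an \emph{essential} torus or Conway sphere downstairs, and conversely inessential descents lift to inessential surfaces), and that the branch-point count correctly distinguishes the torus and Conway-sphere cases. The crucial structural input that makes this work is precisely that $\tilde\pi^{-1}(Q)$ fails to be a closed torus in $E(\tilde K)$, being the four-punctured torus $\tilde T$, which is what prevents the unique essential torus $Q$ of $K$ from producing an essential torus for $\tilde K$.
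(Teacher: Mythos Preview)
Your outline matches the paper's (trichotomy, equivariant torus, project to $(S^3,K)$), but the satellite step has a real gap, not just bookkeeping. To invoke Lemma~\ref{uniquetorus} you need the projected surface $\bar T=\pi(\hat T)$ to be essential in $E(K)$. You argue that a compression of $\bar T$ would lift to compress $\hat T$, but a compressing disk for $\bar T$ in $E(K)$ may run through the ball $B'=N(b)$; its lift then lies in the double branched cover of $(S^3,K)$, which is $\tilde K(r)$, not in $E(\tilde K)\subset S^3$. Thus essentiality of $\hat T$ in $E(\tilde K)$ does not yield essentiality of $\bar T$ in $E(K)$: the obstruction is precisely that $\hat T$ may compress after the $r$-filling. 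The paper isolates this as a separate case and needs substantial input to dispose of it: Gabai~\cite{Ga} forces $\tilde K$ to be a $0$- or $1$-bridge braid in the companion solid torus with winding number $w\ge 2$; Gordon~\cite{Go} identifies $\tilde K(r)$ with an integral surgery on the companion $\overline K$; and one finishes via the torus-knot surgery classification (if $\overline K$ is a torus knot) or the Gordon--Luecke bound $w^2\le 2$~\cite{GL} (if $\overline K$ is hyperbolic), either of which contradicts that $\tilde K(r)$ is toroidal. Your argument simply does not cover this case.

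Your contradiction in the remaining case is also misstated. If $\bar T$ is isotopic to $Q$ in $E(K)$, the isotopy must cross $B'$ (since $Q\cap B'\ne\emptyset$), so it lifts only to $\tilde K(r)$, where the full preimage of $Q$ is the \emph{closed} torus $T$; nothing forces $\hat T$ to sit over the punctured piece $\tilde T\subset E(\tilde K)$, and there is no contradiction from ``$\tilde T$ contains no closed torus''. The paper's argument at this point is different and uses Proposition~\ref{bandnodisjoint}: $\bar T\subset B$ is a satellite torus for $K$ that is disjoint from the band $b\subset B'$, directly contradicting that no unknotting band can miss the satellite torus.
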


\begin{proof}
First note that $\tilde K$ cannot be a torus knot, for it has a Dehn surgery producing a 3-manifold containing a separating incompressible torus. Suppose $\tilde K$ is a 
satellite knot, and let $R$ be an incompressible torus in its exterior. As $\tilde K$ is strongly invertible, by the Equivariant Torus Theorem \cite{H}, there is an incompressible torus $\tilde R$, which is equivariant under the involution defined on $\tilde K$. $\tilde R$ bounds a solid torus $W$ which contains $\tilde K$. The complement of $W$ may contain incompressible tori other than $\tilde R$, but applying again the Equivariant Torus Theorem, there will be another equivariant torus in the complement of $W$. Therefore we can assume that there is an equivariant torus $R$, which defines then a companion knot $\overline K$ for $\tilde K$ which is hyperbolic or a torus knot. 

Suppose first that $R$ compresses after performing $r$-Dehn surgery. Then by \cite{Ga}, $\tilde K$ is a 0 or 1-bridge braid in $W$, with winding number $w\geq 2$. It follows from
\cite{Go} that the manifold $\tilde K(r)$ is obtained by $n^2$-Dehn surgery on $\overline K$, for some integer $n$. This is not possible if $\overline K$ is a torus knot, for the surgery produces a 3-manifold containing a separating incompressible torus. Also, this is not possible if $\overline K$ is an hyperbolic knot, because by \cite{GL}, we should have $w^2 \leq 2$.

Suppose now that $R$ is incompressible in $\tilde K(r)$. As $R$ is equivariant, $\pi(R)$, is a torus or a Conway sphere. But, by Lemma \ref{uniquetorus} there are no Conway spheres for $K$. If $\pi(R)$ is a satellite torus for $K$, it would have to be isotopic to $Q$ by Lemma \ref{uniquetorus}. But, as $\pi(R)$ is disjoint from the band, this is not possible by Proposition \ref{bandnodisjoint}. \end{proof}

So we have shown, 

\begin{theorem} There is a family of hyperbolic strongly invertible knots $\tilde K(m,n,p;q)$, which have a toroidal surgery containing a unique incompressible torus which hits the surgered solid torus in four disks. Furthermore, the incompressible torus is disjoint from the axis of the involution.
\end{theorem}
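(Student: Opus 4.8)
The plan is to collect the clauses of the statement from the lemmas of this section, since each of them has essentially been isolated already. First I would recall that $\tilde K=\tilde K(m,n,p;q)$ is hyperbolic by Lemma \ref{hyperbolic}, and that it is strongly invertible by construction: it is the core of the solid torus $N=\pi^{-1}(N(b))$ in the double branched cover of $S^3$ over the trivial knot $K_b$, so the covering involution $\sigma$ preserves $N$ and restricts to an involution $\tau$ of $E(\tilde K)$ realizing the strong inversion. By the Montesinos trick the surgery is equivariant, so $\tau$ extends to an involution of $\tilde K(r)$; this extended involution is exactly the covering involution of the branched cover $\tilde\pi:\tilde K(r)\to S^3$ branched over $K$, and hence its axis is $\mathrm{Fix}(\tau)=\tilde\pi^{-1}(K)$.

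Next, for the toroidal surgery I would use the observations made just before Lemma \ref{JSJ}: since the winding number of $K$ in $h(V)$ is odd, $T=\tilde\pi^{-1}(Q)$ is a single connected torus double-covering $Q$, and since $Q$ meets the band ball $B'$ in two disks, $T$ meets the surgery solid torus $N$ in four disks. Thus $\tilde T=T\cap E(\tilde K)$ is a four-punctured torus, $T$ is incompressible, and $\tilde K(r)$ is toroidal with the incompressible torus hitting the surgered solid torus in four disks, as required.

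For uniqueness I would invoke Lemma \ref{JSJ}: the JSJ decomposition of $\tilde K(r)$ is $\tilde V\cup_T\tilde M$, with $\tilde V$ hyperbolic and $\tilde M$ the Seifert fibered space $(D^2;\tfrac{(q-1)/2}{q},\tfrac{(q-1)/2}{q})$. The piece $\tilde V$, being hyperbolic, is atoroidal, and a Seifert fibered space over the disk with two exceptional fibers contains no essential torus other than boundary-parallel ones: an essential vertical torus lies over a closed curve in the base that bounds a disk with zero or one cone point, or that separates both cone points from $\partial D^2$, and in each case the vertical torus is respectively compressible, compressible, or parallel to $\partial\tilde M=T$; closed horizontal tori cannot occur because the base orbifold has nonempty boundary. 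By the uniqueness of the JSJ decomposition any incompressible torus in $\tilde K(r)$ can be isotoped onto $T$, so $T$ is the unique incompressible torus up to isotopy. Finally, the axis $\mathrm{Fix}(\tau)=\tilde\pi^{-1}(K)$ is disjoint from $T=\tilde\pi^{-1}(Q)$, because $K$ lies in the interior of $N(J)$ and is therefore disjoint from $Q=\partial N(J)$, and taking $\tilde\pi^{-1}$ preserves this disjointness.

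The only clause requiring genuine care is uniqueness of the incompressible torus; everything else is bookkeeping of facts already proved. The delicate point is confirming that the Seifert fibered piece $\tilde M$ contributes no essential torus beyond $T$, so that JSJ uniqueness really forces $T$ to be the sole incompressible torus. As an alternative I could argue uniqueness downstairs by equivariance: any incompressible torus in $\tilde K(r)$ can be made equivariant and projected by $\tilde\pi$ to an incompressible torus (or Conway sphere) for $K$, which by Lemma \ref{uniquetorus} must be isotopic to $Q$, yielding the same conclusion.
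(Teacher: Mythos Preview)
Your proposal is correct and follows the same approach as the paper: the theorem is stated immediately after Lemmas \ref{JSJ} and \ref{hyperbolic} with only the phrase ``So we have shown,'' so the intended proof is precisely the assembly of those lemmas together with the observations preceding Lemma \ref{JSJ}. You have simply made that assembly explicit, and in particular you have supplied the (implicit in the paper) verification that the Seifert piece $\tilde M=(D^2;\tfrac{(q-1)/2}{q},\tfrac{(q-1)/2}{q})$ is atoroidal, which is what upgrades ``$T$ is the unique JSJ torus'' to ``$T$ is the unique incompressible torus.''
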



\section{ Concluding remarks}.
\label{sec4}

The knots $K(m,n,p;q)$ can be generalized to $K(a_1,a_2,$ $\dots,a_N,p;q)$, where $a_1,a_2,\dots, a_N$, $p$ and $q$ are integers, such that $N$ is even and $q$ is odd. A pattern for the case $N=4$ is shown in Figure \ref{generalizedexample}. In this case the wrapping number is $N+1$. The proofs of Sections 2 and 3 would be identical for large subfamilies of these knots. 

The examples can also be generalized when $N$ is odd and then the wrapping and winding number are even. But in this case the proof of Proposition \ref{bandnodisjoint} does not hold, for the winding number of $K'$ in $V$ is even, and a new proof would have to be done. Also, when taking double branched covers in this case, the torus $Q$ would lift to two tori, giving then examples of hyperbolic knots having two disjoint, non parallel, incompressible tori after Dehn surgery, each intersecting the surgered solid tori in two disks.

\begin{figure}

\includegraphics[angle=0, width=7true cm]{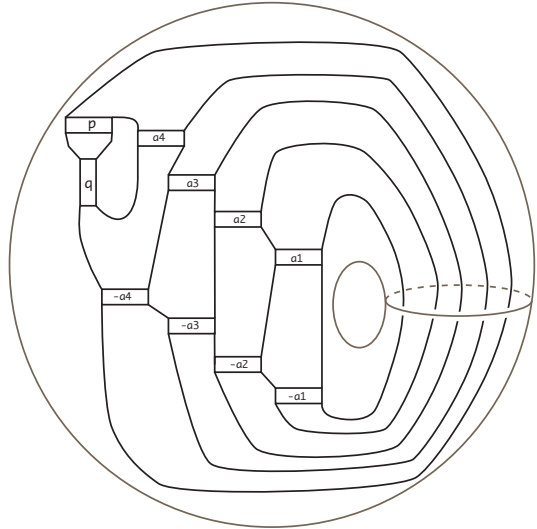}

\caption{The generalized pattern knot.}
\label{generalizedexample}

\end{figure}

We finish with one question,

If $K$ is a satellite knot with satellite torus $Q$, and $b$ is a band that unknots $K$, is there a universal bound for the minimal number of arcs of intersection between $b$ and $Q$? 
Is this bound just two?

\vskip20pt
\textbf{Acknowledgements.} Research partially supported by grant PAPIIT-UNAM IN101317.


\begin{thebibliography}{99}


\vskip 20pt

\bibitem{HN} Hoste, Jim; Nakanishi, Yasutaka; Taniyama, Kouki,
Unknotting operations involving trivial tangles.
Osaka J. Math. 27 (1990), no. 3, 555--566. 

\bibitem{E} Eudave-Mu\~noz, Mario,
Essential tori obtained by surgery on a knot.
Pac. J. Math. 167 (1995), no. 1, 81--117. 

\bibitem{ER} Eudave-Mu\~noz, Mario; Ram\'{\i}rez-Losada, Enrique, 
Toroidal Dehn surgeries. 
A mathematical tribute to Professor Jos\'e Mar\'{\i}a Montesinos Amilibia, 317--336, Dep. Geom. Topol. Fac. Cien. Mat. UCM, Madrid, 2016.

\bibitem{Ga} Gabai, David, 
Surgery on knots in solid tori. 
Topology 28 (1989), no. 1, 1--6. 

\bibitem{Go} Gordon, C. McA.,
Dehn surgery and satellite knots.
Trans. Am. Math. Soc. 275 (1983), 687--708. 

\bibitem{GL} Gordon, C. McA.; Luecke, J. 
Dehn surgeries on knots creating essential tori. I. 
Comm. Anal. Geom. 3 (1995), no. 3-4, 597--644. 

\bibitem{H} Holzmann, W. H.,
An equivariant torus theorem for involutions.
Trans. Am. Math. Soc. 326 (1991), No. 2, 887--906. 

\bibitem{KT} Kim, Paik Kee; Tollefson, Jeffrey L.
Splitting the PL involutions of nonprime 3-manifolds.
Mich. Math. J. 27 (1980), 259-274. 

\bibitem{K} Kobayashi, Tsuyoshi,
Equivariant annulus theorem for 3-manifolds, 
Proc. Japan Acad. 59 (1983), pp. 403--406. 

\bibitem{RN}
N\'u\~nez, V\'{\i}ctor; Ram\'{\i}rez-Losada, Enrique,
The trefoil knot is as universal as it can be.
Topology Appl. 130 (2003), No. 1, 1--17. 

\bibitem{ST}
Scharlemann, Martin; Thompson, Abigail,
Unknotting number, genus and companion tori.
Math. Ann. 280 (1988), no. 2, 191--205. 


\end{thebibliography}
\end{document}